\definecolor{MyLinkColor}{rgb}{0,0,0.4}
\newcommand{\tr}{\mathop{\rm tr}\nolimits}
\newcommand{\p}{\partial}
\newcommand{\h}{\rho}
\newcommand{\ov}{\overline}
\newcommand{\wh}{\widehat}
\newcommand{\A}{\mathcal{A}}
\newcommand{\B}{\mathcal{B}}
\newcommand{\0}{\Omega}
\newcommand{\V}{\mathcal{V}}
\newcommand{\kL}{\mathcal{L}}
\newcommand{\D}{\mathcal{D}}
\newcommand{\s}{\mathbb{S}}
\newcommand{\R}{\mathbb{R}}
\newcommand{\C}{\mathbb{C}}
\newcommand{\cC}{\mathcal{C}}
\newcommand{\cO}{\mathcal{O}}
\newcommand{\x}{\mathcal{T}}
\newcommand{\mS}{\mathcal{S}}
\newcommand{\N}{\mathbb{N}}
\newcommand{\Z}{\mathbb{Z}}
\newtheorem{thm}{Theorem}[section]
\newtheorem{lem}[thm]{Lemma}
\theoremstyle{remark}
\numberwithin{equation}{section}
\title[Necrotic tumor growth]{Analysis of a mathematical model describing  necrotic tumor growth}
\subjclass[2000]{35K55; 35R35; 35R37.}
\keywords{Radially symmetric; Stationary solution; Classical solution; Fourier multiplier.}
\author[J. Escher]{Joachim Escher}
\address{Institut f{\"u}r Angewandte Mathematik, Leibniz Universit{\"a}t Hannover, Welfengarten~1, 30167 Hannover, Germany. }
\email{escher@ifam.uni-hannover.de}
\author[A.-V. Matioc]{Anca-Voichita Matioc}
\address{Institut f{\"u}r Angewandte Mathematik, Leibniz Universit{\"a}t Hannover, Welfengarten~1, 30167 Hannover, Germany. }
\email{matioca@ifam.uni-hannover.de}
\author[B.-V. Matioc]{Bogdan-Vasile Matioc}
\address{Institut f{\"u}r Angewandte Mathematik, Leibniz Universit{\"a}t Hannover, Welfengarten~1, 30167 Hannover, Germany. }
\email{matioca@ifam.uni-hannover.de}
\begin{document}

\begin{abstract}
In this paper we study a model describing the  growth of necrotic tumors in different regimes of vascularisation.
The tumor consists of a necrotic core  of death cells and a surrounding nonnecrotic shell.
The corresponding mathematical formulation is a moving boundary problem where both boundaries delimiting the nonnecrotic shell are allowed  to evolve in time.
We determine all  radially symmetric stationary solutions of the problem  and reduce the moving boundary problem into a nonlinear evolution. Parabolic theory  provides us the perfect context in order to show local well-posed of the problem for small initial data.
\end{abstract}

\maketitle

\section {The mathematical model}
In this paper we study a moving bondary problem describing the growth of a necrotic tumor  in the absence of inhibitor.
The model  purposed initially in  \cite{BC,  FR, Gr}, was reformulated by using algebraic manipulations \cite{VCris, EM} 
to describe evolution of tumors in all regimes of vascularisation.
Nevertheless, the  analysis in \cite{VCris, EM} is simplified by the assumption  that the tumor core is nonnecrotic.
This aspect is considered into our modeling, where following \cite{Gr, Gr2, CF}, we assume that the  tumor consists of a core of death cells (necrotic core) and a shell of life-proliferating cells surrounding 
the core (nonnecrotic shell).
The blood supply provides the nonnecrotic region with nutrients, while there is no blood supply in the necrotic region and the concentration of nutrients is at a constant level which cannot sustain cell proliferation.
However the model presented here  includes two moving boundaries, 
one parametrising the boundary of the necrotic core and one for the outer boundary of the tumor, both of them having infinitely many degrees of freedom.
This fact makes the problem more involved in comparison to other models which either neglect the necrotic core \cite{Cui, CE, CE1, CEZ, CF, BF05} or 
consider only the radially symmetric problem when the tumors are annular domains \cite{Gr, Gr2, CF}.

The mathematical model is given by the following system of equations
\begin{equation}\label{eq:problem}
\left \{
\begin{array}{rlllll}
\Delta \psi &=& \psi   &\text{in} \ \Omega(t) ,&t\geq0, \\[1ex]
\Delta p &=& 0 & \text{in}\ \Omega(t),&t\geq0,  \\[1ex]
\psi &=& G & \text{on}\ \Gamma_1(t),&t\geq0,  \\[1ex]
\psi &=& \displaystyle G-\psi_0& \text{on}\ \Gamma_2(t),&t\geq0,  \\[2ex]
p&=& \kappa_{\Gamma_1(t)}- AG \displaystyle\frac{|x|^2}{4} &\text{on} \ \Gamma_1(t),&t\geq0,\\[2ex]
p&=& \kappa_{\Gamma_2(t)}-AG \displaystyle \frac{|x|^2 }{4}-\psi_0 &\text{on} \ \Gamma_2(t),&t\geq0,\\[2ex]
V_i(t)&=&\displaystyle\p_{\nu_i}\psi -\p_{\nu_i}p -AG \displaystyle\frac{\nu_i\cdot x}{2} 
 & \text{on}\ \Gamma_i(t),&t>0, \,i=1,2 \\[2ex]
\Omega(0)&=&\Omega_0,&&
\end{array}
\right.
\end{equation}
where $\0(t)\subset \R^2$ is the domain occupied by the nonnecrotic shell, $\psi$ is the rate at which nutrient is added to $\0(t)$, over   the outer boundary $\Gamma_1(t)$, by the vascularisation,
$p$ is the pressure, $\Gamma_2(t)$ is the interior boundary enclosing the necrotic core,
$\nu_i$ the outward orientated normal  and $\kappa_{\Gamma_i(t)}$ the curvature of $\Gamma_i(t)$, $i=1,2.$   
By convention, $\kappa_{\Gamma_1(t)}$ is positive and $\kappa_{\Gamma_2(t)}$ negative if $\Gamma_i(t)$, $1\leq i\leq 2$ are close to a circle.
 Moreover, $V_i(t)$ stands for the normal velocity of $\Gamma_i(t),$ the constants $A, G\in\R $ have biological relevancy being related to cell proliferation, cell apoptosis, and vascularisation.
The scalar $\psi_0>0$ is linked with the constant nutrient concentration assumed within the necrotic region. 
The initial tumor domain is given by $\0_0.$

For a precise deduction of the system \eqref{eq:problem} and its biological meaning we refer to \cite{VCris, EM}, the only difference to the model presented there being  the consideration of the interior necrotic
 region bounded by $\Gamma_2(t)$. 
 
 The first main result of this paper is the following theorem:
 
\begin{thm}\label{T:1} Given $(R_1,R_2)\in(0,\infty)^2$ with $R_2<R_1$, let $\psi_0^c$ be the constant defined by \eqref{eq:const}. 
There exists $A\in\R$ and  $G\in\R\setminus\{0\},$ 
such that the annulus 
\[
A(R_1,R_2):=\{x\in\R^2\,:\,R_2<|x|<R_1\},
\] 
 is a stationary solution of problem \eqref{eq:problem} provided $\psi_0\neq\psi_0^c.$
 Moreover, $A$ and $G$ are uniquelly determined by $R_1, R_2,$ and $\psi_0.$
 
If $G=0,$ then problem \eqref{eq:problem} has no radially symmetric stationary solutions.
\end{thm}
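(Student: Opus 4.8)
The plan is to exploit radial symmetry to reduce the moving boundary problem on the annulus to a pair of scalar conditions in the unknowns $A$ and $G$, and then to solve these by linear algebra. Since $A(R_1,R_2)$ is radially symmetric, I look for radial $\psi=\psi(r)$ and $p=p(r)$ with $r=|x|$. The equation $\Delta\psi=\psi$ becomes $\psi''+r^{-1}\psi'=\psi$, a fundamental system of whose solutions on $(R_2,R_1)$ is given by the modified Bessel functions $I_0,K_0$, so $\psi(r)=aI_0(r)+bK_0(r)$; likewise $\Delta p=0$ gives $p(r)=c+d\ln r$. On the annulus the curvatures are $\kappa_{\Gamma_1}=1/R_1$ and $\kappa_{\Gamma_2}=-1/R_2$ in the stated sign convention. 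The two Dirichlet conditions $\psi(R_1)=G$, $\psi(R_2)=G-\psi_0$ fix $(a,b)$ as a linear function of $(G,\psi_0)$, and the two pressure conditions fix $(c,d)$; in particular $d$ is an affine function of the product $AG$ and of $\psi_0$.

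Being a stationary solution forces $V_1=V_2=0$. With outward normals $\nu_1=x/|x|$ on $\Gamma_1$ and $\nu_2=-x/|x|$ on $\Gamma_2$, so that $\nu_1\cdot x=R_1$ and $\nu_2\cdot x=-R_2$, the velocity relations reduce to the two scalar equations $\psi'(R_1)-p'(R_1)-AG\,R_1/2=0$ and $-\psi'(R_2)+p'(R_2)+AG\,R_2/2=0$, with $\psi'=aI_1-bK_1$ and $p'(r)=d/r$. The decisive structural observation is that $A$ and $G$ enter these equations only through $G$ (via $a,b$) and through the single product $m:=AG$ (via $d$ and the explicit $AG$ terms). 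Hence, after substituting the formulas for $a,b,d$, the stationarity conditions form a linear system $M\,(G,m)^{\top}=v$, where the $2\times2$ matrix $M$ depends only on $R_1,R_2$ (through the values of $I_0,I_1,K_0,K_1$ at $R_1,R_2$ and through $L:=\ln(R_1/R_2)>0$), while the right-hand side $v$ is affine in $\psi_0$.

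I then solve this system. The main obstacle is to verify that $\det M\neq0$; I expect to obtain this from the monotonicity and positivity of the modified Bessel functions ($I_0,I_1>0$ increasing, $K_0,K_1>0$ decreasing), the Wronskian identity $I_0(r)K_1(r)+I_1(r)K_0(r)=1/r$, and the strict sign of $L$, which together should force the relevant combination of Bessel values to be of one sign. Granting $\det M\neq0$, the pair $(G,m)$ is uniquely determined and, being the image of an affine-in-$\psi_0$ vector under $M^{-1}$, depends affinely on $\psi_0$; thus $G=g_0+g_1\psi_0$ for constants $g_0,g_1$ depending on $R_1,R_2$. After checking $g_1\neq0$ (again a Bessel computation), I define $\psi_0^{c}:=-g_0/g_1$, the constant of \eqref{eq:const}. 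For $\psi_0\neq\psi_0^{c}$ one has $G\neq0$, and setting $A:=m/G$ recovers the required pair, with all equations of \eqref{eq:problem} satisfied by construction; uniqueness of $(A,G)$ is immediate, since $(G,m)$ is the unique solution of the linear system and $A$ is determined from it.

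Finally, for the last assertion I argue by contradiction. A radially symmetric stationary solution is necessarily an annulus, so the computation above applies with $G=0$, hence $m=AG=0$; the two velocity conditions then read $R_1\psi'(R_1)=d=R_2\psi'(R_2)$, forcing $R_1\psi'(R_1)=R_2\psi'(R_2)$. On the other hand, integrating $\Delta\psi=\psi$ over the annulus and using the divergence theorem gives $R_1\psi'(R_1)-R_2\psi'(R_2)=\tfrac{1}{2\pi}\int_{\Omega}\psi\,dx$; since the boundary data are $\psi=0$ on $\Gamma_1$ and $\psi=-\psi_0<0$ on $\Gamma_2$, the strong maximum principle and Hopf's lemma applied to $\Delta\psi-\psi=0$ give $\psi<0$ throughout the open annulus, whence $\int_{\Omega}\psi\,dx<0$ and $R_1\psi'(R_1)<R_2\psi'(R_2)$. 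This contradicts the equality forced by stationarity, so no radially symmetric stationary solution exists when $G=0$.
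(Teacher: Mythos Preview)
Your reduction of the stationarity conditions to a linear system in the unknowns $(G,AG)$ is exactly what the paper does: writing $a_iG+b_i(AG)=c_i$, $i=1,2$, with $a_i,b_i$ depending only on $R_1,R_2$ and $c_i$ affine in $\psi_0$, and then defining $\psi_0^c$ as the value at which $c_1b_2-c_2b_1$ vanishes. On this part the two arguments coincide, and both leave the nondegeneracy $a_1b_2-a_2b_1\neq0$ at the level of a Bessel computation (the paper in fact appeals to a numerical plot rather than a full proof, so your sketch is at the same standard of rigor).

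Where you genuinely diverge is in the $G=0$ nonexistence. The paper eliminates $\psi_0$ from the two scalar conditions to obtain a single relation in $R_1,R_2$, rewrites it as $g(R_2)=0$ for $g(x)=K_0(R_1)xI_1(x)+I_0(R_1)xK_1(x)-\text{const}$, and checks $g'(x)=x(I_0(x)K_0(R_1)-I_0(R_1)K_0(x))<0$ on $(0,R_1)$ via the monotonicity of $I_0,K_0$. Your route is more conceptual: the two velocity conditions force $R_1\psi'(R_1)=R_2\psi'(R_2)$, while integrating $\Delta\psi=\psi$ over the annulus gives $R_1\psi'(R_1)-R_2\psi'(R_2)=\tfrac{1}{2\pi}\int_\Omega\psi<0$ by the maximum principle (using $\psi_0>0$). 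This avoids special-function manipulations entirely and would transfer to other nutrient equations $\Delta\psi=f(\psi)$ with the right sign structure; the paper's computation, by contrast, is tied to the explicit Bessel solution but has the advantage of being fully explicit.
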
 
 
 In contrast to \cite{VCris, EM}, where the radially symmetric stationary tumors are circles with radius which depends only on the constant $A$, the radii of the stationary annular tumors
 found in Theorem \ref{T:1} depend on both constants $A$ and $G$, but also on $\psi_0,$ cf.  \eqref{solAG}.

 In order to prove local-well-posedness of the moving boundary problem \eqref{eq:problem} the well-posedness of system \eqref{eq:problem} (see Theorem \ref{T:2} below)
 we introduce first a parametrisation for the  interfaces $\Gamma_1(t)$ and $\Gamma_2(t),$ which are the main unknowns of system \eqref{eq:problem}. 
 \begin{figure}\label{F:bifu} 
$$\includegraphics[width=0.5\linewidth]{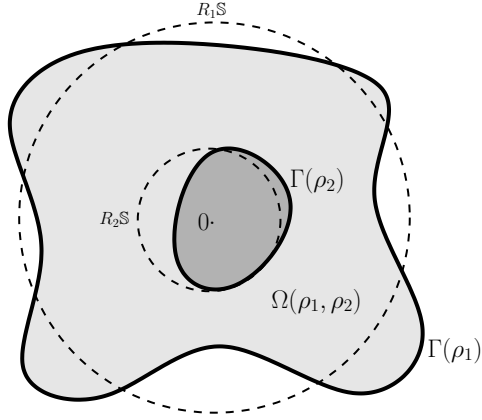}$$
\caption{Parametrisation of the tumor domain}
\end{figure}
 Let $0<R_2<R_1$ be given and fix $\alpha\in(0,1).$
 We set
\[
\V:=\{\h\in h^{4+\alpha}(\s)\,:\, \|\h\|_{C(\s)}<a\},
\]  
where
\[
a<\frac{R_1-R_2}{R_1+R_2}.
\]
The small H\"older space  $h^{m+\beta}(\s),$ $\beta\in(0,1) $ and $m\in\N,$ is defined as the completion of the smooth functions in $C^{m+\beta}(\s).$ 
Each pair $(\h_1, \h_2)\in\V^2 $ parametrises a  $C^{4+\alpha} $-domain
\[
\0(\h_1,\h_2):=\left\{y\in\R^2\, :\, R_2(1+\h_2\left(y/|y|\right))<|y|<R_1(1+\h_1\left(y/|y|\right))\right\}.
\]
The condition on $a $ ensures that the boundary portions of $\Omega(\h_1,\h_2)$
\[
\Gamma(\h_i):=\{x\,:\,|x|=R_i(1+\h_i(x/|x|))\},
\]
$i=1,2,$ are disjoint (see Figure 1) for any choice of $(\h_1, \h_2)\in\V^2.$
 They  can be  seen to be  zero level sets, $\Gamma(\h_i)={N_{\h_i}}^{-1}(0)$, where
 $N_{\h_i}:\R^2\setminus\{0\}\to\R, i=1,2,$ are defined by 
\[
N_{\h_i}(x)=|x|-R_i-R_i\h_i\left(x/|x|\right),\qquad x\neq0.
\]
 Hence, the outward unit normal at $\p\Omega(\h_1,\h_2)$ is  given by 
 \[
\text{$\nu_{\h_1}=\frac{\nabla N_{\h_1}}{|\nabla N_{\h_1}|}$ on $\Gamma(\h_1)$,   and  $\nu_{\h_2}=-\frac{\nabla N_{\h_2}}{|\nabla N_{\h_2}|}$ on $\Gamma(\h_2)$.}
\]
If $(\h_1,\h_2):[0,T]\to\V^2$ describe the motion of the tumor, then we can  express the normal velocity of both boundary components   in terms of  $\h_i$ by the formula
\[
\text{$V_1(t)=-\frac{\p_t N_{\h_1}}{|\nabla N_{\h_1}|}$ on $\Gamma(\h_1(t))$, and $V_2(t)=\frac{\p_t N_{\h_2}}{|\nabla N_{\h_2}|}$ on $\Gamma(\h_2(t))$}.
\]

With this notation, system \eqref{eq:problem} becomes a problem with $\h_1$ and $\h_2$ as unknowns:
\begin{equation}\label{eq:P}
\left \{
\begin{array}{rlllll}
\Delta \psi &=& \psi   &\text{in} \ \Omega(\h_1,\h_2) ,&t\geq0, \\[1ex]
\Delta p &=& 0 & \text{in}\ \Omega(\h_1,\h_2),&t\geq0,  \\[1ex]
\psi &=& G & \text{on}\ \Gamma(\h_1),&t\geq0,  \\[1ex]
\psi &=& G-\psi_0& \text{on}\ \Gamma(\h_2),&t\geq0,  \\[1ex]
p&=& \kappa_{\Gamma(\h_1)}- AG \displaystyle\frac{|x|^2}{4} &\text{on} \ \Gamma(\h_1),&t\geq0,\\[1ex]
p&=& \kappa_{\Gamma(\h_2)}-AG \displaystyle \frac{|x|^2 }{4}-\psi_0 &\text{on} \ \Gamma(\h_2),&t\geq0,\\[1ex]
\p_tN_{\h_i}&=&-\langle \left.\nabla\psi -\nabla p -AG \displaystyle\frac{ x}{2}\right|\nabla N_{\h_i}\rangle 
 & \text{on}\ \Gamma(\h_i),&t>0, \,i=1,2, \\[1ex]
\h_1(0)&=&\h_{01},&&\\[1ex]
\h_2(0)&=&\h_{02}.&&
\end{array}
\right.
\end{equation}
A pair $(\h_1,\h_2,\psi,p)$ is called classical solution of \eqref{eq:problem} on $[0,T], T>0,$ if
\begin{align*}
\h_i\in C([0,T],\V)\cap C^1([0,T], h^{1+\alpha}(\s)), \, i=1,2,\\[1ex]
\psi(t,\cdot), p(t,\cdot)\in \mbox{\it buc}^{2+\alpha}(\0(\h_1(t),\h_2(t))), \, t\in[0,T],  
\end{align*}
and if $(\h_1,\h_2,\psi,p)$ solves \eqref{eq:P} pointwise.
Given $U\subset\R^2$ open, we set $\mbox{\it buc}^{2+\alpha}(U)$ to be the closure of the the smooth functions with bounded and uniformly continuous derivatives
 $\mbox{\it BUC}\,^{\infty}(U)$ within  $\mbox{\it BUC}\,^{2+\alpha}(U)$ (if $U$ is also bounded then $\mbox{\it BUC}\,^{2+\alpha}(U)=C^{2+\alpha}(\ov U).$)

 Concerning well-posedness of system \eqref{eq:problem}, our second main result states that problem \eqref{eq:problem}
 possesses a unique solution provided that initially the tumor is close to an annulus (which is not necessarily a stationary solution).
 
 \begin{thm}[Local well-posedness]\label{T:2} Let $0<R_2<R_1$, and $(A,G, \psi_0)\in\R^3$ be given.
 
 There exists an open neighbourhood $\cO\subset\V$ such that for all $(\h_1,\h_2)\in\cO^2,$
 problem \eqref{eq:P} possesses a unique classical solution defined on a maximal time interval $[0,T(\h_{01},\h_{02}))$ and which satisfies
 $(\h_1,\h_2)(t)\in\cO^2$ for all  $t\in[0,T(\h_{01},\h_{02}))$.
 \end{thm}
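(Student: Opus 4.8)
The plan is to recast \eqref{eq:P} as an abstract quasilinear parabolic evolution equation for the pair $\h=(\h_1,\h_2)$ alone, by eliminating $\psi$ and $p$, and then invoke the standard theory of maximal regularity for such equations (in the spirit of Da~Prato--Grisvard / Lunardi / Escher--Simonett). First I would, for each fixed $(\h_1,\h_2)\in\V^2$, solve the two elliptic boundary value problems: the Helmholtz-type problem $\Delta\psi=\psi$ in $\0(\h_1,\h_2)$ with Dirichlet data $G$ on $\Gamma(\h_1)$ and $G-\psi_0$ on $\Gamma(\h_2)$, and the harmonic problem $\Delta p=0$ with the curvature-type Dirichlet data. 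Both are uniquely solvable in $\mathit{buc}^{2+\alpha}(\0(\h_1,\h_2))$; transporting everything to the fixed reference annulus $A(R_1,R_2)$ via the natural diffeomorphism built from $N_{\h_i}$, one obtains solution operators $(\h_1,\h_2)\mapsto\psi(\h),\,p(\h)$ that depend smoothly (analytically) on $\h$ in the relevant H\"older topologies. Substituting the traces of $\nabla\psi(\h)-\nabla p(\h)-AG x/2$ into the last equation of \eqref{eq:P} and using $\p_t N_{\h_i}=-R_i\p_t\h_i$ yields a system of the form
\[
\p_t\h = \Phi(\h),\qquad \h(0)=(\h_{01},\h_{02}),
\]
where $\Phi:\cO^2\subset h^{4+\alpha}(\s)^2\to h^{1+\alpha}(\s)^2$ is (locally) the restriction of an analytic map.

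The key structural step is to show that the linearisation $\p\Phi(\h_*)$ at a reference point, in particular at $\h_*=(0,0)$ corresponding to the exact annulus, generates an analytic semigroup on $h^{1+\alpha}(\s)^2$ with domain $h^{4+\alpha}(\s)^2$, i.e.\ that $\Phi$ is of \emph{fully nonlinear parabolic type}. The leading-order part comes from the Dirichlet-to-Neumann operator associated with the harmonic pressure $p$ combined with the curvature operator: linearising $\kappa_{\Gamma(\h_i)}$ about the circle of radius $R_i$ produces, up to lower-order terms, the operator $-R_i^{-2}(1+\p_\theta^2)$, and composing with the Dirichlet-to-Neumann map (which is, to leading order, $|\p_\theta|$ or a diagonal pseudodifferential operator of order one on the two circles) gives a system whose symbol behaves like $|k|^3$ on the $k$-th Fourier mode. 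Because the paper announces the use of \emph{Fourier multipliers}, I expect this to be carried out by an explicit Fourier-series computation on $\s$: diagonalising the linearisation mode-by-mode, checking that each symbol lies in a left half-plane shifted away from $0$ after accounting for the off-diagonal coupling between $\Gamma_1$ and $\Gamma_2$, and then quoting a Fourier-multiplier characterisation of generators of analytic semigroups on the little-H\"older scale (such results guarantee that a diagonal operator with symbol $\sim -c|k|^3$, $c>0$, generates an analytic semigroup on $h^{1+\alpha}$ with domain $h^{4+\alpha}$). The zeroth-order terms involving $\psi$ and the $AG$-term are lower order and do not affect generation.

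Once $\p\Phi(0,0)\in\mathcal{H}(h^{4+\alpha}(\s)^2,h^{1+\alpha}(\s)^2)$ is established, a perturbation/continuity argument extends the generation property to a whole neighbourhood $\cO^2$ of $(0,0)$ in $\V^2$ (using that $\h\mapsto\p\Phi(\h)$ is continuous and that $\mathcal{H}$ is open). Then the abstract theorem on quasilinear parabolic problems — existence, uniqueness, and continuous dependence of the maximal solution, with the solution staying in $\cO^2$ as long as it exists — applies directly and gives a unique classical solution on a maximal interval $[0,T(\h_{01},\h_{02}))$ with $\h(t)\in\cO^2$; the regularity $\h_i\in C([0,T],\V)\cap C^1([0,T],h^{1+\alpha}(\s))$ and $\psi(t,\cdot),p(t,\cdot)\in\mathit{buc}^{2+\alpha}$ is read off from the mapping properties of the elliptic solution operators and the parabolic smoothing. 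I expect the main obstacle to be the second step: verifying carefully that the fully nonlinear operator $\Phi$ has the right leading symbol — i.e.\ computing the linearisation of the coupled curvature/Dirichlet-to-Neumann system on the two interfaces and showing the resulting $3$-rd order Fourier multiplier is parabolic despite the coupling — together with the bookkeeping needed to show $\Phi$ really maps into $h^{1+\alpha}(\s)^2$ and is analytic (this requires knowing the elliptic solutions depend analytically on the domain, which in turn rests on the implicit function theorem applied to the transformed elliptic problems on the fixed annulus).
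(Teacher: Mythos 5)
Your proposal follows essentially the same route as the paper: transform to the fixed annulus, obtain analytic solution operators for the two elliptic problems, reduce \eqref{eq:P} to $\p_t\h=\Phi(\h)$, compute the Fourier-multiplier symbols of $\p\Phi(0)$ mode-by-mode to identify a third-order parabolic principal part, and conclude via Lunardi's fully nonlinear parabolic theory. The only cosmetic difference is that the paper handles the coupling between the two interfaces not by diagonalising the matrix symbol but by observing that the off-diagonal multipliers decay exponentially in the mode, hence are lower-order perturbations, and then invoking a generation result for such operator matrices.
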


 The outline of this paper is as follows: we study in Section 2 the radially symmetric free boundary problem  which describes the stationary solutions of problem \eqref{eq:problem}
 and prove Theorem \ref{T:1}.
 In the last section of this paper we prove the local well-posedness result Theorem \ref{T:2}.

 \section{Radially symmetric stationary solutions} 
We determine in this section the radially symmetric steady-state solutions  of \eqref{eq:problem}, 
situation  when  the nonnecrotic shell is a steady annulus.

The most simple situation is the case  $G=0$, when the problem is invariant under translation and rotations..
Then,   the annulus $A(R_1,R_2)$  centred in zero with radii $R_1>R_2,$ is a stationary solution of  system \eqref{eq:problem} if and only if
\[
p'(R_i)=\psi'(R_i),\qquad i=1,2,
\]
 where  $p$ is the solution of the problem
 \begin{equation}\label{eq:p}
 \left \{
\begin{array}{rlllll}
 p''+\displaystyle\frac{1}{r}p' &=& 0, &  R_2<r<R_1,  \\[1ex]
p(R_1)&=& {R_1}^{-1} -AGR_1^2/4,&  \\[1ex]
p(R_2)&=&-{R_2}^{-1}-AGR_2^2/4-\psi_0,
\end{array}
\right.
 \end{equation}
 when $G=0.$
 System \eqref{eq:p} corresponds to the Dirichlet problem for the pressure $p$ in \eqref{eq:problem} (the second, fifth,  and sixth equations of \eqref{eq:problem}), where we used 
polar coordinates when expressing the Laplacian.
Notice that the boundary data are constants, thus $p$ depends  only on   $r$, the distance to the origin.

Given $G\in\R,$ the solution of \eqref{eq:p} is given by the relation 
$p(r)=a_{R_1R_2}\ln(r)+b_{R_1R_2},$ $r_2\leq |r|\leq R_1,$ with
 \begin{align*}
 a_{R_1R_2}&=\frac{{R_1}^{-1}+{R_2}^{-1}+AG\left(R_2^2-R_1^2\right)/4+\psi_0}{\ln(R_1/R_2)},\\[1ex]
 b_{R_1R_2}&={R_1}^{-1}-AGR_1^2/4-a_{R_1R_2}\ln(R_1).
 \end{align*}
Furthermore, $\psi$ is the  solution of  the problem
\begin{equation}\label{eq:psI}
 \left \{
\begin{array}{rlllll}
 \psi''+\displaystyle\frac{1}{r}\psi' -\psi&=& 0, &  R_2<r<R_1,  \\[1ex]
\psi(R_1)&=& G ,&  \\[1ex]
\psi(R_2)&=&G-\psi_0,
\end{array}
\right.
 \end{equation}
 when $G=0$.
Also, for fixed $G\in\R,$ the solution of \eqref{eq:psI} can be written as linear combination of modified  Bessel functions of first and second kind 
$\psi=c^1_{R_1R_2} I_0+c^2_{R_1R_2}K_0, $ with scalars
\begin{align*}
&c^1_{R_1R_2}= \displaystyle\frac{GK_0(R_2)+(\psi_0-G)K_0(R_1)}{ I_0(R_1)K_0(R_2)-I_0(R_2)K_0(R_1)},\\[1ex]
& c^2_{R_1R_2}= \displaystyle\frac{-GI_0(R_2)-(\psi_0-G)I_0(R_1)}{ I_0(R_1)K_0(R_2)-I_0(R_2)K_0(R_1)}.
\end{align*}

Consequently, $A(R_1,R_2)$ is a steady-state solution of \eqref{eq:problem} when $G=0$ if and only if
\begin{align}\label{eq:G=0}
\frac{\displaystyle\frac{1}{R_1}+\frac{1}{R_2}+\psi_0}{\ln(R_1/R_2)}\frac{1}{R_i}&=\psi_0\displaystyle\frac{K_0(R_1)I_1(R_i)+I_0(R_1)K_1(R_i)}{ I_0(R_1)K_0(R_2)-I_0(R_2)K_0(R_1)},\qquad i=1,2,
\end{align}
where we used the relations $I_0'=I_1$ and $K_0'=-K_1.$
It follows then easily that the system consisting of the equations \eqref{eq:G=0}  has solutions $(R_1,R_2)$ with $R_1>R_2$  exactly when 
\begin{equation}\label{eq:bine}
\frac{R_2}{R_1}=\frac{K_0(R_1)I_1(R_1)+I_0(R_1)K_1(R_1)}{K_0(R_1)I_1(R_2)+I_0(R_1)K_1(R_2)}.
\end{equation}
Equation \eqref{eq:bine} is obtained by expressing $\psi_0$ in both relations \eqref{eq:G=0} and setting them to be equal.
We show now that equality holds in the relation above only  when $R_1=R_2.$
Indeed, fix $R_1>0$ and consider the auxiliary  function $g:(0,R_1]\to\R$ with
\[
g(x)=K_0(R_1)xI_1(x)+I_0(R_1)xK_1(x)-R_1(K_0(R_1)I_1(R_1)+I_0(R_1)K_1(R_1))
\]
 for $0<x\leq R_1.$ 
Obviously $g(R_1)=0.$
If we show that the derivative  $g'$ has constant sign on $(0,R_1]$ then we are done, that is there is no positive  $R_2<R_1$ such that $(R_1,R_2) $ solves \eqref{eq:bine}.
Well-known properties of the modified Bessel functions (see \cite{AW}) lead to
\begin{align*}
g'(x)=&K_0(R_1)I_1(x)+I_0(R_1)K_1(x)\\[1ex]
&+K_0(R_1)x(I_0(x)-(1/x)I_1(x))+I_0(R_1)x(-K_0(x)-(1/x)K_1(x))\\[1ex]
=&x(I_0(x)K_0(R_1)-I_0(R_1)K_0(x))<0
\end{align*} 
for all $x\in(0,R_1).$
That the last expression is negative is a consequence of the following facts: $I_0$ and $K_0$ are both positive functions, $I_0$ is strictly increasing, and $K_0$ is strictly decreasing. 
Hence, problem \eqref{eq:problem} has no radially symmetric  stationary when $G=0$.

 Let now $G\neq0.$  
In this case  $A(R_1,R_2) $ is a steady-state solution of \eqref{eq:problem} exactly when
\begin{equation}\label{eq:K}
\psi_{R_1R_2}'(R_i)-p'(R_i)-AG\frac{R_i}{2}=0, \qquad i=1,2.
\end{equation}  
 Using again the relations  $I_0'=I_1$ and $K_0'=-K_1$, the identities \eqref{eq:K} re-write
\begin{equation*}
c_{R_1R_2}^1I_1(R_i)-c^2_{R_1R_2}K_1(R_i)-a_{R_1R_2}\frac{1}{R_i}-AG\frac{R_i}{2}=0, \qquad i=1,2,
\end{equation*}  
 which seem to be very involved as expressions of variables $R_1$ and $R_2$ when trying to solve the system consisting of both of them.
 However, they can be viewed as equations for $A$ and $G$ 
 \begin{equation}\label{eq:AG}
 a_iG+b_iAG=c_i,\qquad i=1,2,
 \end{equation}
 with coefficients $a_i, b_i$, and $c_i$  given by:
 \begin{align*}
 a_i&:=\frac{(K_0(R_2)-K_0(R_1))I_1(R_i)-(I_0(R_1)-I_0(R_2))K_1(R_i)}{I_0(R_1)K_0(R_2)-I_0(R_2)K_0(R_1)},\\[1ex]
 b_i&:=\frac{R_1^2-R_2^2}{4\ln(R_1/R_2)}\frac{1}{R_i}-\frac{R_i}{2},\\[1ex]
 c_i&:=-\psi_0\frac{K_0(R_1)I_1(R_i)+I_0(R_1)K_1(R_i)}{I_0(R_1)K_0(R_2)-I_0(R_2)K_0(R_1)}+\frac{\displaystyle{R_1}^{-1}+{R_2}^{-1}+\psi_0}{\ln(R_1/R_2)}\frac{1}{R_i}.
 \end{align*}
 The system of equations \eqref{eq:AG} has a (unique) solution $(A,G)$ with $G\neq0$ provided that
 \begin{equation}\label{eq:SyS}
\begin{aligned}
& a_1 b_2- a_2b_1 \not = 0,\qquad c_1 b_2- c_2 b_1\not =0,\\
&\text{and} \quad  c_1\not =0 \quad \text{or} \quad c_2 \not =0.
\end{aligned}
\end{equation}
\begin{figure}
$$\includegraphics[width=0.72\linewidth]{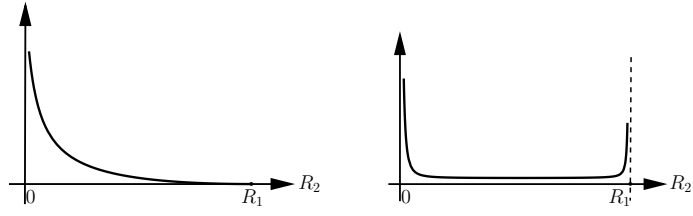}$$
\caption{The expression $a_1b_2-a_2b_1$ and $\psi_0^c$, for fixed $R_1$, as a function of the variable $R_2\in(0,R_1).$}
\end{figure}
The computation done for the case $G=0$ shows that $c_1$ and $c_2$ cannot be simultaneously zero when $R_2<R_1.$
For fixed $R_1>0$ we may see the expression 
$a_1b_2-a_2b_1$ as a function of $R_2\in(0,R_1).$
This function is decreasing with respect to $R_2$ (see Figure 2), thus $a_1b_2=a_2b_1$ only when $R_1=R_2.$
Furthermore, $b_1c_2=b_2c_1$ if and only if $\psi_0=\psi_0^c,$ where
\begin{equation}\label{eq:const}
\psi_0^c:=\frac{\left(b_1/R_1-b_2/R_2\right)\frac{1/R_1+1/R_2}{\ln(R_1/R_2)}}
{\frac{K_0(R_1)(b_1I_1(R_2)-b_2I_1(R_1))+I_0(R_1)(b_1K_1(R_2)-b_2K_1(R_1))}{I_0(R_1)K_0(R_2)-I_0(R_2)K_0(R_1)}+\frac{R_1^2-R_2^2}{2R_1R_2\ln(R_1/R_2)}}.
\end{equation}
It is not difficult to see that the numerator of the fraction is  negative, and the same holds true for the denominator, implying that $\psi_0^c>0.$
We plotted  in Figure 2 the expression  on the right hand side of \eqref{eq:const} for fixed $R_1>0$ in dependence of $R_2\in(0,R_1).$
Consequently, $A(R_1,R_2)$ is a stationary solution of \eqref{eq:problem} if and only if $\psi_0$ is not the critical constant given by \eqref{eq:const} and 
\begin{equation}\label{solAG}
A= \displaystyle\frac{a_1c_2-a_2c_1}{c_1b_2-c_2b_1}, \qquad G= \displaystyle\frac{c_1b_2-c_2b_1}{a_1b_2-a_2b_1}.
\end{equation}
This proves Theorem \ref{T:1}.

\section{The moving boundary problem}
This last section is dedicated entirely to the proof of our secon main result, Theorem \ref{T:2}.
In order to prove well-posedness of problem \eqref{eq:problem} in the context defined in the introduction
we transform first \eqref{eq:P} into a problem on the fixed domain $\Omega:=\Omega(0,0),$
with boundary $\Gamma_1:=R_1\s$ and $\Gamma_2:=R_2\s.$
This transformation will allow us to introduce solution operators related to  problem \eqref{eq:P} and which will enable us to 
reduce system \eqref{eq:P} into an abstract nonlinear evolution equation for the pair $(\h_1,\h_2).$

Pick therefore $0<R_2<R_1$, $(A,G,\psi_0)\in\R^3$, and $\alpha\in(0, 1)$.
Given $(\h_1,\h_2)\in\V^2,$ we define the mapping
$\Theta_{\h_1,\h_2}:\Omega\to\Omega(\h_1,\h_2)$
by the relation 
\[
\Theta_{\h_1,\h_2}(x)=\frac{(R_1-|x|)R_2(1+\h_2(x/|x|))+(|x|-R_2)R_1(1+\h_1(x/|x|))}{R_1-R_2}\frac{x}{|x|}
\]
for $x\in\0.$
One can easily check that $\Theta_{\h_1,\h_2}$ is an diffeomorphism, i.e. $\Theta_{\h_1,\h_2}\in\mbox{\it Diff}\,^{4+\alpha}(\0,\0(\h_1,\h_2)),$
which maps $\Gamma_i$ onto $\Gamma(\h_i), i=1,2.$ 
Using this diffeomorphism, we define the transformed operators
\[
\A(\h_1,\h_2):\mbox{\it buc}^{2+\alpha}(\0)\to \mbox{\it buc}^{\alpha}(\0),\quad \A(\h_1,\h_2)v:=\Delta\left(v\circ\Theta_{\h_1,\h_2}^{-1}\right)\circ \Theta_{\h_1,\h_2}-v,
\]
which is an elliptic operator depending analytically on $(\h_1,\h_2),$ i.e. 
\begin{equation}\label{eq:anA}
\A\in C^\omega(\V^2,\kL(\mbox{\it buc}^{2+\alpha}(\0),\mbox{\it buc}^{\alpha}(\0))),
\end{equation}
respectively the trace operators $\B_i:\V^2\times(\mbox{\it buc}^{2+\alpha}(\0))^2\to h^{1+\alpha}(\s)$ by
\begin{align*}
\B_i(\h_1,\h_2,v,q)&:=\frac{1}{R_i}\cC_i(\h_1,\h_2)v-\frac{1}{R_i}\cC_i(\h_1,\h_2)q-\D_i(\h_1,\h_2).
\end{align*}
Given $(\h_1,\h_2)\in\V^2,$ the linear operators $\cC_i(\h_1,\h_2)\in\kL(\mbox{\it buc}^{2+\alpha}(\0)), h^{1+\alpha}(\s)),$ $i=1,2,$ 
are given by
\[
\cC_i(\h_1,\h_2)v(y):=  \langle \left.\nabla\left(v\circ\Theta_{\h_1,\h_2}^{-1}\right) \right|\nabla N_{\h_i}\rangle \circ\Theta_{\h_1,\h_2}(R_iy)
\]
for $v\in \mbox{\it buc}^{2+\alpha}(\0)$ and $y\in\s.$ 
Moreover,
\[
\D_i(\h_1,\h_2):=-\frac{AG}{R_i} \langle \left. \displaystyle\frac{ x}{2}\right|\nabla N_{\h_i}\rangle \circ\Theta_{\h_1,\h_2}(R_iy).
\]
The operators $\cC_i$ and $\D_i, i=1,2,$ depend  analytically on $(\h_1,\h_2) $ too,  
\begin{equation}\label{eq:anB}
\text{$\cC_i\in C^\omega(\V^2, \kL(\mbox{\it buc}^{2+\alpha}(\0)), h^{1+\alpha}(\s)))$ and $\D_i\in C^\omega(\V^2, h^{1+\alpha}(\s)).$ }
\end{equation}

Having defined this operators we may re-write now \eqref{eq:P} in an equivalent form.
Namely, if $(\h_1,\h_2,\psi,p) $ is a solution of \eqref{eq:P}, $v:=\psi\circ\Theta_{\h_1,\h_2} $, and  $q:=p\circ\Theta_{\h_1,\h_2}$, then the tupel $(\h_1,\h_2,v,q) $
solves the following system
\begin{equation}\label{eq:TP}
\left \{
\begin{array}{rlllll}
\A(\h_1,\h_2) v &=& 0   &\text{in} \ \Omega ,&t\geq0, \\[1ex]
\A(\h_1,\h_2) q &=& 0 & \text{in}\ \Omega,&t\geq0,  \\[1ex]
v &=& G & \text{on}\ \Gamma_1,&t\geq0,  \\[1ex]
v &=& G-\psi_0& \text{on}\ \Gamma_2,&t\geq0,  \\[1ex]
q&=& \frac{1}{R_1} \kappa(\h_1)-\frac{AGR_1^2}{4}(1+\h_1)^2 &\text{on} \ \Gamma_1,&t\geq0,\\[1ex]
q&=& -\frac{1}{R_2} \kappa(\h_2)-\frac{AGR_2^2}{4}(1+\h_2)^2 -\psi_0&\text{on} \ \Gamma_2,&t\geq0,\\[1ex]
\p_t\h_i&=&\B_i(\h_1,\h_2,v,q)& \text{on}\ \s,&t>0, \,i=1,2, \\[1ex]
\h_1(0)&=&\h_{01},&&\\[1ex]
\h_2(0)&=&\h_{02},&&
\end{array}
\right.
\end{equation}
where $\kappa:\V\to h^{2+\alpha}(\s)$ is defined by
\[
\kappa(\h):=\frac{(1+\h)^2+2\h'^2-(1+\h)\h''}{((1+\h)^2+\h'^2)^{3/2}},\qquad \h\in\V,
\]
and we identified  functions on $\Gamma_i$ with those on $\s$, $i=1,2$, via the diffeomorphisms  $[\s\ni y\mapsto R_i y\in\Gamma_i].$ 

Though the problem becomes more involved (the diffeomorphism introduces additionally nonlinearities), \eqref{eq:TP} has the advantage that the sets where 
the differential equations and the boundary conditions are imposed do not change with time.
It is convenient now to introduce  solution  operators  to Dirichlet problem closely related to system \eqref{eq:TP}. 
\begin{lem}\label{L:SO} Given $(\h_1,\h_2)\in\V^2,$ we let $\x(\h_1,\h_2)$, $\mS(\h_1,\h_2)\in \mbox{\it buc}^{2+\alpha}(\0)$ denote the unique solution of   
\begin{equation}\label{eq:DP1}
\left \{
\begin{array}{rlllll}
\A(\h_1,\h_2) v &=& v   &\text{in} \ \Omega,  \\[1ex]
v &=& G & \text{on}\ \Gamma_1,\\[1ex]
v &=& G-\psi_0& \text{on}\ \Gamma_2,
\end{array}
\right.
\end{equation}
and 
\begin{equation}\label{eq:DP2}
\left \{
\begin{array}{rlllll}
\A(\h_1,\h_2) q &=& 0    &\text{in} \ \Omega,  \\[1ex]
q &=&\frac{1}{R_1} \kappa(\h_1)-\frac{AGR_1^2}{4}(1+\h_1)^2 & \text{on}\ \Gamma_1,\\[1ex]
q &=&-\frac{1}{R_2} \kappa(\h_2)-\frac{AGR_2^2}{4}(1+\h_2)^2 -\psi_0& \text{on}\ \Gamma_2,
\end{array}
\right.
\end{equation}
respectively.
The operators $\x$ and $\mS$ depend analytically on $(\h_1,\h_2).$
\end{lem}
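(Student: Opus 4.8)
The plan is to reduce Lemma \ref{L:SO} to a standard elliptic existence-uniqueness statement together with the analytic dependence \eqref{eq:anA}. First I would observe that $\A(\h_1,\h_2)$ is a second-order elliptic operator with $h^\alpha$-coefficients on the fixed annulus $\0$: indeed, pulling back the Laplacian via the $C^{4+\alpha}$-diffeomorphism $\Theta_{\h_1,\h_2}$ produces an operator in divergence or nondivergence form whose principal part is uniformly elliptic (the Jacobian of $\Theta_{\h_1,\h_2}$ is invertible with bounded inverse, a consequence of the constraint $\|\h_i\|_{C(\s)}<a$), and whose coefficients lie in $\mbox{\it buc}^\alpha(\0)$. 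The Dirichlet problem \eqref{eq:DP2} is then exactly $(\Delta_{\h}-\text{lower order})q=0$ in $\0$ with prescribed $h^{2+\alpha}$-data on $\Gamma_1\cup\Gamma_2$; since $\0$ is a bounded $C^{4+\alpha}$ (in particular $C^{2+\alpha}$) domain, Schauder theory gives a unique solution $q\in C^{2+\alpha}(\ov\0)=\mbox{\it buc}^{2+\alpha}(\0)$. Problem \eqref{eq:DP1} is the same, except the equation reads $\A(\h_1,\h_2)v=v$, i.e. $\Delta_{\h}v=(1+\text{transported }1)v$; the zeroth-order term has the correct sign (the operator $\Delta_{\h}-(1+\cdots)$ satisfies the maximum principle), so again Schauder theory yields a unique $v\in\mbox{\it buc}^{2+\alpha}(\0)$. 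This settles existence and uniqueness.

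For the analytic dependence, the strategy is to set up \eqref{eq:DP1} and \eqref{eq:DP2} as the vanishing of a parameter-dependent map and apply the analytic implicit function theorem. Concretely, for \eqref{eq:DP2} consider
\[
\Phi:\V^2\times\mbox{\it buc}^{2+\alpha}(\0)\longrightarrow \mbox{\it buc}^{\alpha}(\0)\times h^{2+\alpha}(\s)\times h^{2+\alpha}(\s),
\]
\[
\Phi(\h_1,\h_2,q):=\Big(\A(\h_1,\h_2)q,\ \gamma_1 q-\tfrac{1}{R_1}\kappa(\h_1)+\tfrac{AGR_1^2}{4}(1+\h_1)^2,\ \gamma_2 q+\tfrac{1}{R_2}\kappa(\h_2)+\tfrac{AGR_2^2}{4}(1+\h_2)^2+\psi_0\Big),
\]
where $\gamma_i$ is the trace on $\Gamma_i$. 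By \eqref{eq:anA}, \eqref{eq:anB}, the analyticity of $\kappa$ (it is a rational expression in $\h,\h',\h''$ with nonvanishing denominator on $\V$), and the fact that multiplication is bilinear hence analytic, $\Phi$ is analytic in all its arguments; it is moreover affine in $q$. For fixed $(\h_1,\h_2)$ the partial derivative $\p_q\Phi(\h_1,\h_2,q)=(\A(\h_1,\h_2)\cdot,\gamma_1\cdot,\gamma_2\cdot)$ is exactly the operator sending $w\mapsto(\A(\h_1,\h_2)w,\gamma_1 w,\gamma_2 w)$, which is an isomorphism $\mbox{\it buc}^{2+\alpha}(\0)\to\mbox{\it buc}^{\alpha}(\0)\times h^{2+\alpha}(\s)^2$ precisely because the homogeneous Dirichlet problem for the elliptic operator $\A(\h_1,\h_2)$ is uniquely solvable (again Schauder, using the maximum principle). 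The analytic implicit function theorem then shows the solution $\mS(\h_1,\h_2)$ of $\Phi(\h_1,\h_2,\cdot)=0$ depends analytically on $(\h_1,\h_2)$. The same argument with the equation $\A(\h_1,\h_2)v=v$ replaced into the first slot (i.e. using the map $v\mapsto\A(\h_1,\h_2)v-v$) handles $\x$; invertibility of $v\mapsto(\A(\h_1,\h_2)v-v,\gamma_1 v,\gamma_2 v)$ follows from unique solvability of \eqref{eq:DP1}, which holds by the maximum principle for $\Delta_{\h}-(1+\cdots)$.

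The main technical point — and the place where the argument needs care rather than routine invocation — is verifying that $\A(\h_1,\h_2)$, as written via $\Theta_{\h_1,\h_2}$, genuinely has principal coefficients in $\mbox{\it buc}^{\alpha}(\0)$ and is uniformly elliptic on $\V^2$, together with the correct functional-analytic setting: one must check that the trace operators land in $h^{2+\alpha}(\s)$ with the right loss of regularity, that the small-H\"older spaces (completions of smooth functions) are preserved by all these operations, and that the Schauder isomorphism theorem applies in the $h$-scale — this last being standard but worth citing, since $\mbox{\it buc}^{2+\alpha}$ is a closed subspace of $\mbox{\it BUC}^{2+\alpha}$ on which the solution operator restricts. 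Once the right framework is in place, existence/uniqueness is classical elliptic theory and the analytic dependence is a one-line application of the implicit function theorem, so I would spend the bulk of the write-up pinning down the functional-analytic bookkeeping and then simply quote the analyticity \eqref{eq:anA}–\eqref{eq:anB} plus the analytic implicit function theorem.
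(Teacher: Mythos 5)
Your argument is correct and follows essentially the same route as the paper: the paper's proof simply cites the Schauder solvability of the Dirichlet problems (\cite[Theorem~6.14]{GT}), deduces analytic dependence from the analyticity of $\A$ and $\kappa$ (i.e.\ exactly your implicit-function-theorem step), and handles the $\mbox{\it buc}^{2+\alpha}$-range by noting that smooth data are mapped into $\mbox{\it BUC}^{2+\alpha}(\0)$ and passing to the closure, which is the same bookkeeping point you flag at the end. Your write-up is just a more detailed rendering of that two-line argument, so there is nothing to correct.
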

\begin{proof} Given $(\h_1,\h_2)\in\V^2,$ the Dirichlet problems \eqref{eq:DP1} and \eqref{eq:DP2}
are uniquelly solvable, cf. \cite[Theorem 6.14]{GT}.
Moreover, since $\A$ and $\kappa$ depend analytically on their variables we deduce that also $\x$ and $\mS$ do that.
We may take now into consideration that $\x$ and $\mS$ map both the smooth functions into $\mbox{\it BUC}^{2+\alpha}(\0)$
 and conclude that their range is contained in $\mbox{\it buc}^{2+\alpha}(\0).$
\end{proof}

With this definion  \eqref{eq:TP} reduces to the following evolution equation
\begin{equation}\label{eq:CP}
\p_tX=\Phi(X)\qquad X(0)=X_0,
\end{equation}
where $X:=(\h_1,\h_2),$ $X_0:=(\h_{01},\h_{02}),$ and $\Phi:=(\Phi_1,\Phi_2).$
The components of the nonlocal and nonlinear operator $\Phi$ are defined as follows
\begin{align*}
\Phi_i(\h_1,\h_2)&:=\B_i(\h_i, \x(\h_1,\h_2),\mS(\h_1,\h_2)),\qquad i=1,2.
\end{align*}
In order to prove well-posedness of problem \eqref{eq:CP}
it suffices to show that
\[
\p\Phi(0)=
\left[
\begin{array}{cc}
\p_{\h_1}\Phi_1(0)&\p_{\h_2}\Phi_1(0)\\[1ex]
\p_{\h_1}\Phi_2(0)&\p_{\h_2}\Phi_2(0)
\end{array}
\right]
\]
generates a strongly continuous and analytic semigroup.
The key role is played by the operator $\mS$ which depends on the highest order  derivatives of $\h_i, i=1,2.$
We have:
\begin{thm} The operator $\Phi$ is analytic, i.e. $\Phi\in C^\omega(\V^2, (h^{1+\alpha}(\s))^2).$
Given $\beta\in(0,1),$ the Fr\'echet derivative $\p\Phi(0)$, seen as an unbounded operator in $(h^{1+\beta}(\s))^2$  with domain $(h^{4+\beta}(\s))^2$ generates a strongly  continuous and analytic semigroup in $\kL((h^{1+\beta}(\s))^2),$ i.e.
\[
-\p\Phi(0)\in \mathcal{H}((h^{4+\beta}(\s))^2,(h^{1+\beta}(\s))^2).
\] 
\end{thm}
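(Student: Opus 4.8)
The plan is to establish analyticity of $\Phi$ first and then to reduce the generation statement for $\p\Phi(0)$ to a pair of scalar Fourier-multiplier problems on the circle. Analyticity of $\Phi$ is essentially formal: by Lemma~\ref{L:SO} the solution operators $\x,\mS$ are in $C^\omega(\V^2,\mbox{\it buc}^{2+\alpha}(\0))$, by \eqref{eq:anB} the boundary operators $\B_i$ (through $\cC_i$ and $\D_i$) depend analytically on $(\h_1,\h_2)$, and $\kappa:\V\to h^{2+\alpha}(\s)$ is analytic since it is a rational expression in $\h,\h',\h''$ with non-vanishing denominator on $\V$. Composition of analytic maps between Banach spaces is analytic, so $\Phi\in C^\omega(\V^2,(h^{1+\alpha}(\s))^2)$.

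For the generation statement I would compute $\p\Phi(0)$ explicitly. At $(\h_1,\h_2)=(0,0)$ the domain is the fixed annulus $\0$, $\A(0,0)=\Delta-\id$, and $\x(0,0),\mS(0,0)$ are the radially symmetric functions $\psi,p$ from Section~2. Differentiating the defining Dirichlet problems \eqref{eq:DP1}, \eqref{eq:DP2} with respect to $(\h_1,\h_2)$ in a direction $(h_1,h_2)\in(h^{4+\beta}(\s))^2$ produces, after linearization, boundary-value problems of the form $(\Delta-\id)\dot v=0$ (resp.\ $\Delta\dot q=0$) in $\0$ with Dirichlet data on $\Gamma_1,\Gamma_2$ that are \emph{linear} in $h_i$; crucially the data for $\dot q$ contains the term $-\frac{1}{R_i}\dot\kappa(0)h_i=-\frac{1}{R_i}(h_i+h_i'')$, which carries the two top-order derivatives. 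Inserting these into the linearization of $\B_i$ and using separation of variables on the annulus (expanding $h_i=\sum_k \hat h_{i,k}e^{ik\cdot}$, with the elliptic problems solved on each Fourier mode by $I_{|k|},K_{|k|}$ for $\dot v$ and $r^{|k|},r^{-|k|}$ for $\dot q$) one obtains that $\p\Phi(0)$ acts on each mode $k$ as a $2\times2$ matrix symbol $M(k)$. I would isolate the leading behaviour: the $\dot q$-contribution through $\dot\kappa$ gives, on the $k$-th mode, a factor $\sim k^2$ times a positive constant coming from the normal derivative of the harmonic extension, i.e.\ $M(k)\sim -|k|^3 D + O(k^2)$ with $D$ a diagonal positive matrix (the $-|k|^3$ is the familiar symbol of the linearized curvature-driven flow: one derivative from $\p_\nu$ applied to a degree-$|k|$ harmonic mode, times $k^2$ from $h+h''$). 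Lower-order terms, including the $\x$-contribution (which is $O(k)$ since $\dot\psi$ has smooth, bounded data) and the $\D_i$ and geometric terms, are genuinely of lower order.

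The generation claim then follows from a perturbation argument in the spirit of the Da Prato–Grisvard / Amann theory used for such quasilinear parabolic problems: writing $\p\Phi(0)=-B+$ (lower order), where $B$ is the operator with diagonal principal symbol $|k|^3 D$, $B$ generates an analytic semigroup on $(h^{1+\beta}(\s))^2$ with domain $(h^{4+\beta}(\s))^2$ by a direct Fourier-multiplier check (the symbol is a positive third-order polynomial, and the little-Hölder scale is an interpolation scale on which such multipliers are bounded), and the remainder is of order $\le 2$, hence relatively bounded with zero relative bound in the appropriate interpolation space, so $\mathcal{H}((h^{4+\beta}(\s))^2,(h^{1+\beta}(\s))^2)$ is stable under it. Strong continuity is automatic in the little-Hölder spaces since smooth functions are dense. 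With $-\p\Phi(0)\in\mathcal{H}((h^{4+\beta}(\s))^2,(h^{1+\beta}(\s))^2)$ in hand, Theorem~\ref{T:2} follows from the standard well-posedness theorem for quasilinear parabolic equations $\p_t X=\Phi(X)$ whose linearization generates an analytic semigroup.

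The main obstacle is the explicit identification of the principal symbol of $\p\Phi(0)$: one must differentiate the nonlocal solution operators $\x,\mS$ at zero, track which terms survive at orders $k^3$ and $k^2$ through the compositions with $\cC_i$ and the diffeomorphism $\Theta_{\h_1,\h_2}$, and verify that the $2\times2$ principal symbol matrix is diagonal with strictly positive entries (so that no coupling between the two interfaces appears at top order and parabolicity is genuine). The off-diagonal entries come only from the harmonic extension "seeing" the other boundary, which is exponentially small in $|k|$ ($\sim(R_2/R_1)^{|k|}$), so they do not affect the principal part — but establishing this decay cleanly, and checking the signs of the diagonal entries, is the technical heart of the argument.
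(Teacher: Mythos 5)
Your proposal is correct and follows essentially the same route as the paper: analyticity by composition of the analytic solution and trace operators, then identification of the principal part of $\p\Phi(0)$ with the linearized curvature term $\h+\h''$ fed through the Dirichlet-to-Neumann map on the annulus, computed mode by mode so that the diagonal entries are third-order Fourier multipliers (symbol $\sim -|m|^3/R_i^3$) generating analytic semigroups, while the off-diagonal coupling has exponentially decaying symbol and is therefore of lower order, after which a perturbation/interpolation argument (Da Prato--Grisvard, Lunardi, Amann) yields the generation property. The only difference is presentational: the paper verifies the multiplier statements via a cited Marcinkiewicz-type multiplier theorem rather than your informal ``direct Fourier-multiplier check.''
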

\begin{proof}
The regularity assumption follows directly from \eqref{eq:anA} and \eqref{eq:anB}.
Moreover, since the constant $\alpha$ fixed at the begining of this section was arbitrary, we may replace $\alpha$ by $\beta$ and all the assetions already established remain valid.

Let us now study the Fr\'echet derivative of $\Phi$ in $0$.
One can easily see that the highest order terms in $(\h_1,\h_2)$ of $\p\Phi(0,0)[(\h_1,\h_2)]$ are those 
obtained when differentiating the curvature operator.

Consider first $\p_{\h_1}\Phi_1(0)$.
Since $\p\kappa(0)[\h]=-\h''-\h$  for $\h\in\h^{4+\alpha}(\s),$  we may decompose  
\[
\p_{\h_1}\Phi_1(0)[\h_1]=A_{11}+B_{11},
\]
where $B_{11}$ is an operator of first order, i.e. $B_{11}\in\kL(h^{2+\beta}(\s), h^{1+\beta}(\s)),$
\[
A_{11}\h_1:=-\frac{1}{R_1^2}C_1(0)(\Delta,\tr_1,\tr_2)^{-1}(0,\h_1'',0),\quad \forall \h_1\in h^{4+\beta}(\s),
\]
and $\tr_i, i=1,2,$ is the trace operator with respect to $\Gamma_i.$
We determine now a Fourier expansion for the highest order term of  $\p_{\h_1}\Phi_1(0)[h_1].$
Given  $\h_1\in\h^{4+\beta}(\s),$ the function $w:=(\Delta,\tr_1,\tr_2)^{-1}(0,h_1'',0)$ is the solution of the Dirichlet problem
\begin{equation}\label{eq:DPP}
\left \{
\begin{array}{rlllll}
\Delta w &=& 0    &\text{in} \ \Omega,  \\[1ex]
w &=&\h_1'' & \text{on}\ \Gamma_1,\\[1ex]
w &=&0& \text{on}\ \Gamma_2.
\end{array}
\right.
\end{equation}
If we expand
\[
\text{$\h_1(y)=\sum_{m}\wh \h_1(m) y^m $ and $w(ry)=\sum_{m\in\Z} w_m(r)y^k $}
\]
for $y\in\s$ and $R_2<r<R_1$, we find that 
 $w_0=0$,  and $w_m$ solves, for $|m|\geq 1,$ the problem 
\begin{equation*}
\left \{
\begin{array}{rlllll}
\displaystyle w_m''+\frac{1}{r}w_m'-\frac{m^2}{r^2}w_m &=& 0 &R_2<r<R_1 \\[1ex]
w_m(R_1)&=&-m^2\wh\h_1(m) \\[1ex]
w_m(R_2) &=&0.
\end{array}
\right.
\end{equation*}
Hence
\[
w_m(r)=-\frac{R_2^{m}r^{-m}-R_2^{-m}r^m}{R_2^{m}R_1^{-m}-R_2^{-m}R_1^m}m^2\wh \h_1(m),
\]
and therewith
\begin{align*}
A_{11}\h_1(y)&=-\frac{1}{R_1^2}\cC_1(0)w(y)=-\frac{1}{R_1^2}\langle\nabla w(R_1y)|y\rangle=-\frac{1}{R_1^2}\frac{d}{dr}\left.\left(w(ry)\right)\right|_{r=R_1}\\[1ex]
&=-\frac{1}{R_1^3}\sum_{m\in\Z\setminus\{0\}}\frac{R_1^{|m|}R_2^{-|m|}+R_1^{-|m|}R_2^{|m|}}{R_1^{|m|}R_2^{-|m|}-R_1^{-|m|}R_2^{|m|}}|m|^3\wh \h_1(m)y^m.
\end{align*}

We  proceed similarly and write $\p_{\h_2}\Phi_1(0)=A_{12}+B_{12},$
where $B_{12}\in\kL(h^{2+\beta}(\s), h^{1+\beta}(\s))$ and 
\[
A_{12}\h_2:=-\frac{1}{R_1R_2}C_1(0)(\Delta,\tr_1,\tr_2)^{-1}(0,0,\h_2'')\quad \forall \h_2\in h^{4+\beta}(\s).
\]
Given $\h_2\in h^{4+\beta}(\s),$ the function $w:=(\Delta,\tr_1,\tr_2)^{-1}(0,0,\h_2'')$ is the solution of linear Dirichlet problem
\begin{equation}\label{eq:DP6}
\left \{
\begin{array}{rlllll}
\Delta w &=& 0    &\text{in} \ \Omega,  \\[1ex]
w &=&0 & \text{on}\ \Gamma_1,\\[1ex]
w &=&\h_2''& \text{on}\ \Gamma_2.
\end{array}
\right.
\end{equation}
A Fourier series ansatz as we did before yields that 
\[
w(ry)=-\sum_{m\in\Z\setminus\{0\}} \frac{R_1^{m}r^{-m}-R_1^{-m}r^m}{R_1^{m}R_2^{-m}-R_1^{-m}R_2^m}m^2\wh \h_2(m)y^m 
\]
for all $y\in\s$ and $R_2<r<R_1,$ provided that $\h_2=\sum_{m\in\Z}\wh \h_2(m) y^m .$
Whence,
\[
A_{12}\sum_{m\in\Z}\wh \h_2(m) y^m=-\frac{1}{R_1^2R_2}\sum_{m\in\Z\setminus\{0\}}\frac{2}{R_1^{|m|}R_2^{-|m|}-R_1^{-|m|}R_2^{|m|}}|m|^3\wh \h_2(m)y^m.
\]

We consider now the second component $\Phi_2$ and continue our computation following the same scheme.
The second diagonal element of the matrix $\p\Phi(0)$ may be also written as the sum $\p_{\h_2}\Phi_2(0)=A_{22}+B_{22},$
with 
$B_{22}\in\kL(h^{2+\beta}(\s), h^{1+\beta}(\s))$ and 
\[
A_{22}\h_2:=-\frac{1}{R_2^2}C_2(0)(\Delta,\tr_1,\tr_2)^{-1}(0,0,\h_2'')\quad \forall \h_2\in h^{4+\beta}(\s).
\]
Using once more the expansion for the solution of \eqref{eq:DP6}, we find out that
\[
A_{22}\sum_{m\in\Z}\wh \h_2(m) y^m=-\frac{1}{R_2^3}\sum_{m\in\Z\setminus\{0\}}\frac{R_1^{|m|}R_2^{-|m|}+R_1^{-|m|}R_2^{|m|}}{R_1^{|m|}R_2^{-|m|}-R_1^{-|m|}R_2^{|m|}}|m|^3\wh \h_2(m)y^m.
\]
for all $\h_2=\sum_{m\in\Z}\wh \h_2(m) y^m $ within $h^{4+\beta}(\s).$
Finally,  $\p_{\h_1}\Phi_2(0)=A_{21}+B_{21},$ where $B_{21}\in\kL(h^{2+\beta}(\s), h^{1+\beta}(\s))$
and 
\[
A_{21}\h_1:=\frac{1}{R_1R_2}C_2(0)(\Delta,\tr_1,\tr_2)^{-1}(0,\h_1'',0)\quad \forall \h_2\in h^{4+\beta}(\s).
\]
Since $(\Delta,\tr_1,\tr_2)^{-1}(0,\h_1'',0)$ is the solution of \eqref{eq:DPP}, we may use the expansion found at that point of the proof and obtain that
\[
A_{21}\sum_{m\in\Z}\wh \h_1(m) y^m=-\frac{1}{R_1R_2^2}\sum_{m\in\Z\setminus\{0\}}\frac{2}{R_1^{|m|}R_2^{-|m|}-R_1^{-|m|}R_2^{|m|}}|m|^3\wh \h_1(m)y^m
\]
for all functions $\h_1=\sum_{m\in\Z}\wh \h_1(m) y^m $ in $h^{4+\beta}(\s).$

Let us notice that the operators $A_{ij}, 1\leq i,j\leq2,$ found above are all Fourier multipliers, since they are of the form
\[
\sum_{m\in\Z}\wh \h_1(m) y^m\mapsto \sum_{m\in\Z}M_k\wh \h_1(m)y^m
\]
with  symbol $(M_k)_{k\in\Z}\subset\C$.
Using \cite[Theorem 3.4]{EM4}, which is a   theorem characterising multiplier operators between H\"older spaces by studying some generalised Marcinkiewicz conditions for the symbol of the operator,
we find out that
$-A_{ii}\in \mathcal{H}(h^{4+\beta}(\s), h^{1+\beta}(\s)), i=1,2,$ and  that $A_{12}, A_{21}\in\kL(h^{2+\beta}(\s)).$ 
This may be seen form the following relations
\[
\text{$\frac{2}{R_1^{|m|}R_2^{-|m|}-R_1^{-|m|}R_2^{|m|}}\to_{|m|\to\infty}0$ and $\frac{R_1^{|m|}R_2^{-|m|}+R_1^{-|m|}R_2^{|m|}}{R_1^{|m|}R_2^{-|m|}-R_1^{-|m|}R_2^{|m|}}\to_{|m|\to\infty}1.$}
\]
Since $h^{2+\beta}(\s)$ is an intermidiate space,
\[
h^{2+\beta}(\s)=(h^{1+\alpha}(\s), h^{4+\alpha}(\s))_{(1+\beta-\alpha)/3},
\]
where $(\cdot|\cdot)$ denotes the interpolation functor introduced by Da Prato and Grisvard \cite{DG},
 we obtain form \cite[Proposition 2.4.1]{L} that the elements on the diagonal of $\p\Phi(0)$ generate analytic semigroups, that is
\[ 
-\p_{\h_i}\Phi_i{i}\in \mathcal{H}(h^{4+\beta}(\s), h^{1+\alpha}(\s)), i=1,2,
\]
 while the elements on the secondary diagonal belong to $\kL(h^{2+\beta}(\s),h^{1+\beta}(\s)),$ that is they have lower order.
 We obtain then form \cite[Theorem 1.6.1]{Am} that the matrix $\p\Phi(0)$ is a generator, which  completes the proof.
\end{proof}

We give  now a short proof of our second main result, Theorem \ref{T:2}.
\begin{proof}[Proof of Theorem \ref{T:2}] Let $0<\beta<\alpha.$
Since $\Phi$ is analytic and $\p\Phi(0)$ generates a strongly continuous and analytic semigroup, we find an open neighborhood $\widetilde \cO$ of 
$0$ in $h^{4+\beta}(\s)$ such that $-\p\Phi(\h_1,\h_2)\in\mathcal{H}((h^{4+\beta}(\s))^2,(h^{1+\beta}(\s))^2)$ for all $(\h_1,\h_2)\in\widetilde\cO^2.$
Letting $\cO:=\widetilde\cO\cap h^{4+\alpha}(\s)),$ we find that $-\p\Phi(\h_1,\h_2)\in\mathcal{H}((h^{4+\alpha}(\s))^2,(h^{1+\alpha}(\s))^2)$ is, for all $(\h_1,\h_2)\in\cO^2,$
the realisation of the operator $-\p\Phi(\h_1,\h_2)\in\mathcal{H}((h^{4+\beta}(\s))^2,(h^{1+\beta}(\s))^2).$
Whence, the assumptions of \cite[Theorem 8.4.1]{L} are all satisfied and the desired assertion follows at once.
\end{proof}

\end{document}